\newtheorem{proposition}{Proposition}
\newtheorem{corollary}{Corollary}
\newtheorem{theorem}{Theorem}
\def\v{{\bf v}}
\def\R{{\cal R}}
\title{On perfect matchings, edge-colourings and eigenvalues of cubic graphs}
\author{Willem H. Haemers\thanks{e-mail haemers@uvt.nl}
\\
{\it\small Tilburg University, The Netherlands}
}
\date{}
\begin{document}
\maketitle
\begin{abstract}
We discuss the question whether the existence of perfect matchings in a cubic graph can be seen from the spectrum of its adjacency matrix. 
For regular graphs in general and
for three edge-disjoint perfect matchings in a cubic graph (that is, an edge-colouring with three colours) 
the answer is known to be negative.
In the latter case, a few counter examples (found by computer) are known.
Here we show that these counter examples can be extended to an infinite family by use of truncation.
Thus we obtain infinitely many pairs of cospectral cubic graphs with different edge-chromatic number.
For all these pairs both graphs have a perfect matching, and the mentioned question is still open.
But we do find a new sufficient condition for a perfect matching in a cubic graphs in terms of its spectrum.
In addition we obtain a few more results concerning spectral characterisations of cubic graphs.
\\
Keywords: cubic graph, truncation, chromatic index, perfect matching, 
spectral characterisation, cospectral graphs.
AMS subject classification: 05C45, 05C50.
\end{abstract}

\section{Introduction}

We consider the relation between the structure of a graph and the spectrum of the adjacency matrix. 
More in particular, we are interested in graph properties which are characterised by its spectrum.
Some well-known examples are: the numbers of vertices, edges and triangles, bipartiteness and regularity 
(for this and other background on graph spectra we refer to~\cite{BH} and~\cite{DH}).
Here we focus on regular graphs.
For several graph properties, pairs of cospectral regular graphs have been found, where one graph has the property and the other one not, proving that the property is not characterised by the spectrum, not even if we require regularity. 
We mention: the diameter~\cite{HS}, the chromatic number~\cite{EH}, edge and vertex-connectivity~\cite{H}, being Hamiltonian~\cite{EH} and having a perfect matching~\cite{BCH}.
We remark that none of the mentioned counter examples are cubic (regular of degree~3).
In this note we find pairs of cospectral cubic graph that can serve as counter examples to the spectral characterisation of some famous graph problems.

An {\em edge-colouring} of a graph $G$ is a colouring of its edges such that intersecting edges
have different colours. 
Thus a set of edges with the same colours (called a colour class) is a matching.
The {\em chromatic index} (also known as the {\em edge-chromatic number}) of $G$ is the minimum number of colours in an edge-colouring. 
For a regular graph $G$ of degree $k$, it follows from Vizing's theorem \cite{V} that the chromatic index of $G$ is equal to $k$ or $k+1$.
Moreover, if the chromatic index equals $k$, then each colour class is a perfect matching.

In \cite{EH} the question is raised whether the chromatic index 
of a regular graph is determined by the spectrum of the adjacency matrix.
Shortly after that Yan and Wang~\cite{YW} found a pair of cospectral cubic graphs on 16 vertices with different chromatic index by computer search; see Figure~\ref{GH}.
They also established that it is the unique such pair of order at most 16.
Also by computer, a few more pairs on 18 and 20 vertices were found by Krystal Guo (private communication).
Here we show that a simple operation called truncation leads to infinitely many examples.
\begin{figure}[ht]
\begin{center}
\epsfig{file = 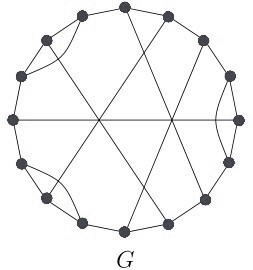,height=135pt, width=125pt}\hspace{30pt}
\epsfig{file = 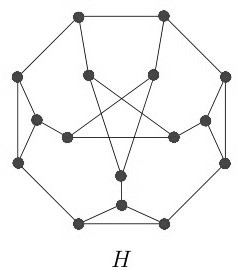,height=135pt, width=125pt}
\caption{Cospectral cubic graphs with chromatic index 3 and 4, 
respectively, and characteristic polynomial 
$x(x-3)(x+2)(x^2-2)(x^2-x-3)(x^3-4x-2)(x^3-4x+1)(x^3+2x^2-2x-2)$ 
}\label{GH}
\end{center}
\end{figure}
~\\[-30pt]

\section{Truncation}

Suppose $G$ is a cubic graph. 
{\em Truncation} is the operation that replaces a vertex $v$ of $G$ by a triangle and connects each neighbour of $v$ to one vertex of the triangle (see Fig~\ref{truncation}). 
\begin{figure}[ht]
\setlength{\unitlength}{3pt}
\begin{picture}(0,20)(-52,-10)
\thicklines
\put(0,0){\line(-4,3){8}}
\put(0,0){\line(4,3){8}}
\put(0,-8){\line(0,1){8}}
\put(27,-8){\line(0,1){4}}
\put(27,-4){\line(3,5){5}}
\put(27,-4){\line(-3,5){5}}
\put(22,4){\line(1,0){9}}
\put(22,4){\line(-2,1){4}}
\put(32,4){\line(2,1){4}}
\put(10,-2){\huge $\rightarrow$}
\put(0,0){\circle*{2}}
\put(27,-4){\circle*{2}}
\put(22,4){\circle*{2}}
\put(32,4){\circle*{2}}
\end{picture}\caption{Truncation}\label{truncation}
\end{figure}
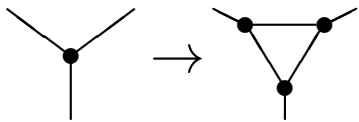
If we truncate with respect to every vertex we obtain the truncation of $G$ denoted by $T(G)$.
The truncated graph $T(G)$ can also be described as the line graph of the complete subdivision of $G$.
Using this the spectrum of $T(G)$ can be expressed in terms of the spectrum of $G$;
see Theorem 2.1 in Zhang, Chen and Chen~\cite{ZCC}.

\begin{theorem}\label{trunc-eig}\cite{ZCC}
Let $G$ be a cubic graph of order $n$ with spectrum $\lambda_1=3\geq\ldots\geq\lambda_n$.
Then the eigenvalues of the truncated graph $T(G)$ are 
$\frac{1}{2}\pm\frac{1}{2}\sqrt{13+4\lambda_i}$ for 
$i\in\{1,\ldots,n\}$,
$-2$ and $0$, both with multiplicity $n/2$.
\end{theorem}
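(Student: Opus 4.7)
My plan is to exploit the identification $T(G) = L(S(G))$ noted in the excerpt (where $S(G)$ is the complete subdivision of $G$), together with the classical line-graph identity $N^T N = 2I + A(L(H))$ for the vertex--edge incidence matrix $N$ of any graph $H$. So it suffices to compute the spectrum of $N^T N$ when $N$ is the incidence matrix of $S(G)$, which has size $\frac{5n}{2}\times 3n$.

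I would first compute the characteristic polynomial of the smaller matrix $NN^T$. The graph $S(G)$ is bipartite, with parts $V(G)$ (of degree $3$) and the $\frac{3n}{2}$ subdivision vertices (of degree $2$), so
\[
NN^T = \left(\begin{array}{cc} 3I_n & B \\ B^T & 2I_{3n/2} \end{array}\right),
\]
where $B$ is the $n\times \frac{3n}{2}$ vertex--edge incidence matrix of $G$. A Schur-complement computation, eliminating the $(\mu-2)I_{3n/2}$ block, then gives
\[
\det(\mu I - NN^T) = (\mu-2)^{n/2}\det\left((\mu-2)(\mu-3)I_n - BB^T\right),
\]
and the cubic identity $BB^T = 3I_n + A(G)$ rewrites the right-hand side as $(\mu-2)^{n/2}\phi_G(\mu^2-5\mu+3)$, with $\phi_G$ the characteristic polynomial of $G$.

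To translate back to $A(T(G)) = N^T N - 2I$, I would invoke the standard rectangular-matrix identity $\det(\lambda I_{3n} - N^T N) = \lambda^{n/2}\det(\lambda I_{5n/2} - NN^T)$ and set $\mu = \lambda+2$. The result is the clean formula
\[
\phi_{T(G)}(\lambda) = (\lambda+2)^{n/2}\,\lambda^{n/2}\,\phi_G(\lambda^2-\lambda-3),
\]
from which the eigenvalues read off at once: $-2$ and $0$ each with multiplicity $n/2$, together with, for each $\lambda_i$, the two roots of $\lambda^2-\lambda-3=\lambda_i$, namely $\frac{1}{2}(1\pm\sqrt{13+4\lambda_i})$.

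The main obstacle I anticipate is bookkeeping: making sure the $(\mu-2)^{n/2}$ and $\lambda^{n/2}$ prefactors come out with the correct exponents, and that the Schur-complement step remains valid at $\mu=2$. The clean fix is to note that both sides of the displayed determinantal identity are polynomials in $\mu$, so the formula derived for $\mu\neq 2$ extends by polynomial identity. This automatically covers the bipartite case, in which $\lambda_n=-3$ makes $BB^T$ singular and might otherwise seem to demand a separate argument.
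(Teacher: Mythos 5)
Your argument is correct, and it follows exactly the route the paper indicates (namely $T(G)=L(S(G))$ together with the incidence-matrix identities $N^{\top}N=2I+A(L(S(G)))$ and $NN^{\top}=D+A(S(G))$); the paper itself gives no proof but delegates this computation to \cite{ZCC}, and your Schur-complement bookkeeping, exponents $n/2$, and the final identity $\phi_{T(G)}(\lambda)=(\lambda+2)^{n/2}\lambda^{n/2}\phi_G(\lambda^2-\lambda-3)$ all check out. One small remark: the possible singularity of $BB^{\top}$ in the bipartite case is a non-issue, since the only division in your Schur step is by $\mu-2$, and your polynomial-identity observation already handles that point.
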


It is easily seen that two truncated cubic graphs $T(G)$ and $T(G')$ are isomorphic if and only if $G$ and $G'$ are. 
So if we start with a pair of nonisomorphic cospectral cubic graphs, then by repeated truncation we find infinitely many such pairs. 
Ramezani and Tayfeh-Rezaie~\cite{FT} found several such pairs with 14, 16, 18 and 20 vertices.

\begin{proposition}~\label{trunc-index}
A cubic graph $G$ has the same chromatic index as its truncated graph $T(G)$.
\end{proposition}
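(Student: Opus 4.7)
The plan is to prove $\chi'(G)=\chi'(T(G))$ by exhibiting a natural correspondence between proper $3$-edge-colourings of $G$ and of $T(G)$. Since $T(G)$ is again cubic, Vizing's theorem places both chromatic indices in $\{3,4\}$, so it suffices to show that $G$ admits a proper $3$-edge-colouring if and only if $T(G)$ does.

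First I would fix notation. Each vertex $v\in V(G)$ is replaced in $T(G)$ by a triangle $T_v$ whose three vertices correspond bijectively to the three edges of $G$ incident to $v$. I call the edges inherited from $G$ the \emph{old} edges of $T(G)$ and the three edges of each $T_v$ the \emph{new} edges; every vertex of $T(G)$ is incident to exactly one old edge and two new edges.

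Given a proper $3$-edge-colouring of $G$, I would extend it to $T(G)$ by assigning to each old edge the colour it carries in $G$. At each $T_v$, the three incident old edges use each of the colours $1,2,3$ exactly once; so there is a unique way to colour the three new edges of $T_v$ that is compatible, namely: if the two old edges meeting at the endpoints of a new edge have colours $a$ and $b$, colour that new edge with the remaining colour $c$. A direct verification on the triangle $T_v$ (using $\{a,b\}\neq\{a,c\}\neq\{b,c\}$) shows this yields a proper $3$-edge-colouring of $T(G)$.

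Conversely, given a proper $3$-edge-colouring of $T(G)$, the three edges of each triangle $T_v$ form a $K_3$ and therefore use all three colours. At every vertex of $T_v$ the two incident new edges thus exhaust two colours, forcing the old edge there to bear the third. Consequently the three old edges meeting $T_v$ carry the three colours $1,2,3$ once each, so restricting the colouring to old edges gives a proper $3$-edge-colouring of $G$. The only step demanding any care is the consistency of the forced colours on each triangle in the forward direction; once the notation is fixed this is essentially a one-line check, so I anticipate no real obstacle.
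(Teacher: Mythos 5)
Your proposal is correct and follows essentially the same route as the paper: transfer the colouring along the edges inherited from $G$, colour each new triangle edge with the colour missing at (equivalently, the colour of the old edge at) the opposite vertex, and for the converse observe that the three old edges meeting a triangle must receive distinct colours. The reduction to $3$-colourability via Vizing's theorem is also exactly the framing the paper uses.
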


\begin{proof}
Suppose $G$ is a cubic whose edges are properly coloured with three colours.
The edges of $T(G)$ between triangles correspond to the edges of $G$ and we will give
them the same colour as in $G$. 
Then for each triangle $\Delta$ of $T(G)$, the three edges 
that meet $\Delta$ in one vertex have a different colour.
So we obtain an edge-colouring of $T(G)$ with three colours when we colour each edge of $\Delta$ with the colour of the edge meeting $\Delta$ in the opposite vertex.

Conversely, if $T(G)$ has an edge-colouring with three colours, then for each triangle $\Delta$ the three edges meeting $\Delta$ in one vertex have different colours and give an edge-colouring of $G$ with three colours.
\end{proof}

From the above proof it is clear that the chromatic index also doesn't change if we truncate with respect to just some of the vertices. 
If we do so for the vertices of a path of length 2 in the Petersen graph,
we obtain graph $H$ in Figure~\ref{GH}.
Therefore $H$ has the same chromatic index as the Petersen graph, which is equal to 4.
  
Theorem~\ref{trunc-eig} and Proposition~\ref{trunc-index} imply that if we start repeated truncation with one of the pairs of cospectral cubic graphs 
with different chromatic index, we can conclude: 

\begin{theorem}\label{cospcub}
There exist infinitely many pairs of cospectral cubic graphs with different chromatic index.
\end{theorem}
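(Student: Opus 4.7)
The plan is to construct the infinite family by iterated truncation starting from a known counterexample. Let $(G_0, H_0)$ be the Yan--Wang pair shown in Figure~\ref{GH}: two cospectral cubic graphs on 16 vertices with chromatic indices 3 and 4 respectively. Inductively, define
\[
G_{k} = T(G_{k-1}), \qquad H_{k} = T(H_{k-1}) \qquad (k\geq 1).
\]
I would show by induction that for every $k\geq 0$ the pair $(G_k, H_k)$ consists of cospectral, nonisomorphic cubic graphs with chromatic indices 3 and 4. This immediately yields the theorem, because the order of $T(G)$ is $3|V(G)|$, so the pairs produced at different values of $k$ have different orders and hence are mutually distinct.

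For the inductive step there are three things to check, each of which is already delivered by the preceding results. First, truncation preserves cubicness (each new vertex lies in its triangle and has exactly one outside neighbour). Second, by Theorem~\ref{trunc-eig} the spectrum of $T(G)$ depends only on the order of $G$ and on the multiset of eigenvalues of $G$; since $G_{k-1}$ and $H_{k-1}$ are cospectral cubic graphs of the same order, $G_k$ and $H_k$ are cospectral as well. Third, by Proposition~\ref{trunc-index} the chromatic index is invariant under truncation, so $G_k$ retains chromatic index 3 while $H_k$ retains chromatic index 4; in particular $G_k \not\cong H_k$. The nonisomorphism could alternatively be extracted from the remark in the excerpt that $T(G)\cong T(G')$ iff $G\cong G'$, but the chromatic-index distinction already suffices.

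The main (and only) conceptual obstacle is that one must know the spectrum of $T(G)$ is a function of the spectrum of $G$ alone; this is precisely the content of Theorem~\ref{trunc-eig}, which we invoke as a black box. Given that input, the argument is a one-line induction and does not require any further calculation.
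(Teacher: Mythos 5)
Your proposal is correct and follows exactly the paper's route: iterate truncation on the Yan--Wang pair, using Theorem~\ref{trunc-eig} to preserve cospectrality and Proposition~\ref{trunc-index} to preserve the chromatic index, with the growing order guaranteeing infinitely many distinct pairs. The paper states this in a single sentence just before the theorem; your write-up merely makes the same induction explicit.
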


The chromatic index of a graph is the same as the chromatic number of its line graph.
Moreover, the line graphs of non-isomorphic cospectral regular graphs are also non-isomorphic and cospectral. 
Because the line graph of a cubic graph is regular of degree 4, we have:

\begin{corollary}\label{cosp4reg}
There exist infinitely many pairs of cospectral 4-regular graphs with different chromatic number.
\end{corollary}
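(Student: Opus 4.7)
The plan is to deduce the corollary directly from Theorem~\ref{cospcub} by passing to line graphs, using three standard facts.

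First, by definition a proper edge colouring of a graph $G$ is exactly a proper vertex colouring of its line graph $L(G)$, so $\chi'(G)=\chi(L(G))$. Second, if $G$ is cubic of order $n$, then $L(G)$ is $4$-regular on $3n/2$ vertices; and for a $k$-regular graph the spectrum of $L(G)$ is determined by that of $G$ via the familiar identity expressing the eigenvalues of $L(G)$ as $\lambda_i+k-2$ together with $-2$ with multiplicity equal to the excess of edges over vertices (here $n/2$). Consequently, cospectral cubic graphs give rise to cospectral $4$-regular line graphs. Third, Whitney's theorem says that connected graphs with isomorphic line graphs are themselves isomorphic, except for the single pair $\{K_3,K_{1,3}\}$; in particular, two connected non-isomorphic cubic graphs have non-isomorphic line graphs.

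The plan is then to combine these facts: take the infinite family of pairs $(G,G')$ of cospectral non-isomorphic cubic graphs with $\chi'(G)\neq\chi'(G')$ produced by Theorem~\ref{cospcub}. The graphs in that family arise by repeated truncation starting from the known connected pair of order~16 (or any other seed), and since truncation preserves connectivity the resulting cubic graphs are connected. Passing to line graphs yields a pair $(L(G),L(G'))$ of $4$-regular graphs that are cospectral (by the second fact), non-isomorphic (by Whitney), and satisfy $\chi(L(G))=\chi'(G)\neq\chi'(G')=\chi(L(G'))$.

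The only point requiring care is the application of Whitney's theorem, which needs the graphs involved not to be isolated triangles; since the cubic graphs in the infinite family all have order at least $16$ and are connected, this hypothesis is trivially met. No nontrivial obstacle remains, so the corollary follows at once.
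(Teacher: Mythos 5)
Your proposal is correct and follows essentially the same route as the paper: pass from the cospectral cubic pairs of Theorem~\ref{cospcub} to their line graphs, using $\chi'(G)=\chi(L(G))$, the fact that line graphs of cospectral regular graphs are cospectral, and non-isomorphism of the line graphs (which the paper asserts directly and you justify via Whitney's theorem). The extra details you supply (the explicit line-graph eigenvalue formula and the connectivity check for Whitney) are correct but not needed beyond what the paper already states.
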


It is also not difficult to see that a cubic graph $G$ is Hamiltonian if and only if $T(G)$ is.
In Figure~\ref{GH}, $G$ is Hamiltonian and $H$ is not (another such pair of cubic graphs can be found in~\cite{LWYL}), therefore by repeated truncation we get the following result.

\begin{proposition}
There exist infinitely many pairs of cospectral cubic graphs, where one is Hamiltonian and the other one not.
\end{proposition}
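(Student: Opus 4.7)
The plan is to combine three ingredients: Theorem~\ref{trunc-eig}, which tells us that truncation preserves cospectrality; the remark following it, which says that $T$ is injective on isomorphism classes of cubic graphs so that iterated truncation of a non-isomorphic pair remains non-isomorphic; and a new ingredient that must be established, namely the equivalence $G$ Hamiltonian $\Longleftrightarrow$ $T(G)$ Hamiltonian. Once this equivalence is in hand, the proposition follows by starting from the pair $(G,H)$ of Figure~\ref{GH} (or the pair in~\cite{LWYL}), in which $G$ is Hamiltonian while $H$ is not, and applying truncation repeatedly.

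For the equivalence, I would prove both directions directly. For the forward direction, given a Hamiltonian cycle $v_1v_2\cdots v_n v_1$ in $G$, construct a Hamiltonian cycle in $T(G)$ as follows. At each vertex $v_i$ the cycle uses two of the three edges incident to $v_i$; the triangle $\Delta_i$ that replaces $v_i$ in $T(G)$ has three vertices, one per incident edge. Enter $\Delta_i$ at the vertex corresponding to the edge $v_{i-1}v_i$, walk through the third vertex of $\Delta_i$, and exit at the vertex corresponding to $v_iv_{i+1}$; stitching these walks across the inter-triangle edges gives a cycle in $T(G)$ that visits every vertex exactly once. For the converse, suppose $T(G)$ has a Hamiltonian cycle $C$, and fix a triangle $\Delta$ corresponding to a vertex $v$ of $G$. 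If $a$ and $b$ denote the numbers of triangle edges and inter-triangle edges of $\Delta$ used by $C$, then counting edge-endpoints at the three vertices of $\Delta$ gives $2a+b=6$, so $(a,b)=(3,0)$ or $(2,2)$. The case $(3,0)$ would make $\Delta$ a connected component of $C$, which is impossible since $|V(T(G))|=3n>3$. Hence each triangle is traversed as a single contiguous arc of $C$ using exactly two of its three inter-triangle edges; contracting each $\Delta$ back to $v$ turns $C$ into a Hamiltonian cycle in $G$.

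With this equivalence proved, the assembly is immediate: $T(G)$ and $T(H)$ are cospectral by Theorem~\ref{trunc-eig}, $T(G)$ is Hamiltonian while $T(H)$ is not, and $T(G)\not\cong T(H)$ because $G\not\cong H$. Iterating $T$ produces infinitely many such pairs. The main obstacle is the reverse direction of the Hamiltonicity equivalence; the small local count at each triangle, together with the observation that the component $(a,b)=(3,0)$ is incompatible with $C$ being a single spanning cycle, is the crux.
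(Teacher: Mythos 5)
Your proposal is correct and follows the same route as the paper, which simply asserts that a cubic graph $G$ is Hamiltonian if and only if $T(G)$ is (calling it ``not difficult to see'') and then applies repeated truncation to the pair of Figure~\ref{GH}. The only difference is that you supply the details of that equivalence, and your degree-count argument $2a+b=6$ at each triangle, with the exclusion of the case $(a,b)=(3,0)$, is a valid way to do so.
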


Another interesting observation about truncation is the following:

\begin{proposition}
Being a truncated cubic graph is a property characterised by the spectrum.
\end{proposition}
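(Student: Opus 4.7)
The plan is to extract from the spectrum enough combinatorial data to recover the characteristic structure of a truncated cubic graph---namely $n$ vertex-disjoint triangles spanning $V$ together with a perfect matching between them---and then to obtain the cubic pre-image by contracting the triangles. Let $H$ be cospectral with $T(G)$ for some cubic $G$ of order $n$. Since regularity and the vertex count are spectral, $H$ is cubic on $3n$ vertices.

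The first step is a moment calculation from Theorem~\ref{trunc-eig}. Using $(a+b)^k+(a-b)^k$ with $a=\tfrac12$ and $b=\tfrac12\sqrt{13+4\lambda_i}$, together with $\sum_i\lambda_i=0$ and $\sum_i\lambda_i^2=3n$, one obtains $\mathrm{tr}(A(T(G))^3)=6n$ and $\mathrm{tr}(A(T(G))^4)=45n$. Therefore $H$ has exactly $n$ triangles.

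The heart of the argument is to upgrade ``$n$ triangles'' to ``$n$ vertex-disjoint triangles''. For any cubic graph on $3n$ vertices,
\[
\mathrm{tr}(A^4)=27n+2\sum_{e\in E}t_e^2+2\sum_{\{u,v\}\notin E}c_{uv}^2,
\]
where $t_e$ is the number of triangles through the edge $e$ and $c_{uv}$ the number of common neighbours of a non-adjacent pair $\{u,v\}$. Counting paths of length two in a cubic graph yields $\sum_e t_e+\sum_{\{u,v\}\notin E}c_{uv}=9n$, and since $\sum_e t_e=3n$, comparing with the previous identity gives
\[
\sum_e t_e^2+\sum_{\{u,v\}\notin E}c_{uv}^2=9n=\sum_e t_e+\sum_{\{u,v\}\notin E}c_{uv}.
\]
Because $t_e$ and $c_{uv}$ are non-negative integers and $x^2\geq x$ with equality only for $x\in\{0,1\}$, we conclude $t_e,c_{uv}\in\{0,1\}$ throughout.

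The remainder is structural. Since no edge lies in two triangles and cubic-ness forbids two triangles meeting in a single vertex (that vertex would have degree $\geq 4$), the $n$ triangles are vertex-disjoint and span $V(H)$, and the $3n/2$ remaining edges form a perfect matching. Contracting each triangle to a point yields a cubic multigraph $G'$ on $n$ vertices; it has no loops, since a matching edge inside a triangle would duplicate one of its edges in the simple graph $H$, and no multi-edges, since two matching edges between triangles $T,T'$ would produce a non-adjacent pair with two common neighbours, violating $c_{uv}\leq 1$. Hence $G'$ is simple and cubic, and $H=T(G')$. The step I expect to be the main obstacle is vertex-disjointness of the triangles: the triangle count alone cannot exclude diamond-type overlaps, but the fourth-moment identity together with the integer inequality $x^2\geq x$ provides edge-by-edge rather than aggregate control, which is exactly what is needed.
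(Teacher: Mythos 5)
Your proof is correct and takes essentially the same route as the paper: your fourth-moment identity forcing $t_e,c_{uv}\in\{0,1\}$ is exactly the paper's observation that $H$ has the same number of closed $4$-walks as $T(G)$ and hence no $4$-cycles (a $4$-cycle being precisely a pair of vertices with two common neighbours). After that, both arguments conclude that the $n$ triangles are vertex-disjoint and joined pairwise by at most one edge, and contract them to recover a cubic $G'$ with $H=T(G')$.
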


\begin{proof}
Suppose $H$ is a graph with the spectrum of $T(G)$ given above.
Then $H$ is a cubic graph of order $3n$ with $n$ triangles.
Moreover, $H$ has the same number of closed 4-walks as $T(G)$.
But $T(G)$ has no 4-cycles and therefore only trivial closed 4-walks.
So also $H$ has no 4-cycles, which implies that all triangles are 
disjoint, and that two triangles are connected by at most one edge.
Now we define the graph $G'$ whose vertices are the triangles, 
where triangles are adjacent whenever they are connected by an edge in 
$H$. 
Then clearly $H=T(G')$.
\end{proof}

It follows that if a cubic graph $G$ is determined by the spectrum, then so is $T(G)$.
According to~\cite{FT}, there are more than half a million non-isomorphic cubic graphs of order 20 which are determined by their spectrum. 
So by repeated truncation we obtain equally many infinite sequences of cubic graphs determined by the spectrum. 

\section{Perfect matching}

In the previous section we saw that having three edge-disjoint perfect matchings in a cubic graph is a property that can not be seen from the spectrum. 
But the question remains if the existence of a perfect matching in a cubic graph can be seen from the spectrum. 
Note that truncation cannot help, because every truncated cubic graph has a perfect matching.
We believe the answer should also be negative, but failed to prove it.
However, along the way we did find a new sufficient condition for existence of a perfect matching in a cubic graph in terms of the spectrum.
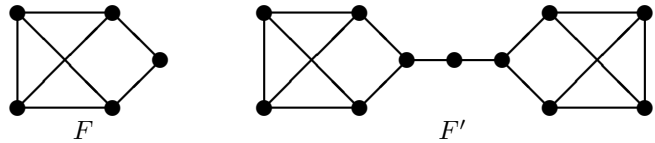
\begin{figure}[ht]
\setlength{\unitlength}{3pt}
\begin{picture}(30,14)(-28,-1)
\thicklines
\put(0,0){\line(0,1){12}}
\put(0,0){\line(1,0){12}}
\put(0,0){\line(1,1){12}}
\put(0,12){\line(1,-1){12}}
\put(0,12){\line(1,0){12}}
\put(18,6){\line(-1,1){6}}
\put(18,6){\line(-1,-1){6}}
\put(0,12){\circle*{2}}
\put(12,12){\circle*{2}}
\put(12,0){\circle*{2}}
\put(0,0){\circle*{2}}
\put(18,6){\circle*{2}}
\put(7,-4){$F$}
\end{picture}
\begin{picture}(60,14)(-28,-1)
\thicklines
\put(0,0){\line(0,1){12}}
\put(0,0){\line(1,0){12}}
\put(0,0){\line(1,1){12}}
\put(0,12){\line(1,-1){12}}
\put(0,12){\line(1,0){12}}
\put(18,6){\line(-1,1){6}}
\put(18,6){\line(-1,-1){6}}
\put(0,12){\circle*{2}}
\put(12,12){\circle*{2}}
\put(12,0){\circle*{2}}
\put(0,0){\circle*{2}}
\put(18,6){\circle*{2}}
\put(48,0){\line(0,1){12}}
\put(36,0){\line(1,0){12}}
\put(36,0){\line(1,1){12}}
\put(36,12){\line(1,-1){12}}
\put(36,12){\line(1,0){12}}
\put(30,6){\line(1,1){6}}
\put(30,6){\line(1,-1){6}}
\put(18,6){\line(1,0){12}}
\put(36,12){\circle*{2}}
\put(48,12){\circle*{2}}
\put(48,0){\circle*{2}}
\put(36,0){\circle*{2}}
\put(30,6){\circle*{2}}
\put(24,6){\circle*{2}}
\put(22,-4){$F'$}
\end{picture}\caption{Graphs with largest eigenvalue $\theta\approx 2.85577$ and $\theta'\approx 2.94272$, respectively}\label{FF'}
\end{figure}

Consider the graphs $F$ and $F'$ in Figure~\ref{FF'} with largest eigenvalues 
$\theta$ ($\approx 2.85577$) and $\theta'$ ($\approx 2.94272$), respectively.
It is proved in~\cite{CGrH} that a cubic graph with third largest eigenvalue less than $\theta$, has a perfect matching. 
Here we give a similar condition which is often better.
\begin{theorem}\label{PM}
Let $G$ be a cubic graph of order $n$ with eigenvalues $\lambda_1 =3\geq \lambda_2 \geq\ldots\geq\lambda_n$.
If $\lambda_2<\theta'$ and $n>76$ then $G$ has a perfect matching.
\end{theorem}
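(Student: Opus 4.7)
We argue by contradiction: suppose $G$ has no perfect matching. By Tutte's theorem (and the fact that $n$ is even) there is a set $S\subseteq V(G)$ with $s:=|S|$ and $k:=o(G-S)\ge s+2$. Each odd component $C$ of $G-S$ sends an odd number $t_C\ge 1$ of edges into $S$, and $\sum_C t_C\le 3s$, so at least three components satisfy $t_C=1$. Such a ``$1$-attachment'' component is an induced subgraph of $G$ that is \emph{almost-cubic}: one vertex of degree~$2$, all others of degree~$3$; the unique almost-cubic graph on at most five vertices is $F$, with $\lambda_1(F)=\theta$.

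The crucial structural observation is: if two $1$-attachment components $C_1\cong C_2\cong F$ share a common attachment vertex $v\in S$, then the subgraph of $G$ induced on $V(C_1)\cup\{v\}\cup V(C_2)$ is exactly the graph $F'$ of Figure~\ref{FF'} (the third neighbour of $v$ lies outside both $C_i$, because each $C_i$ is $1$-attached), so its largest eigenvalue is $\theta'$. By Cauchy interlacing applied to the disjoint union of two induced subgraphs $H_1,H_2$ of $G$ with $\lambda_1(H_i)\ge\theta'$, one obtains $\lambda_2(G)\ge\lambda_2(H_1\sqcup H_2)\ge\theta'$, contradicting the hypothesis. The goal is therefore to exhibit such a disjoint pair $(H_1,H_2)$ of ``$\theta'$-witnesses''.

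A case analysis does this. First, if two distinct vertices of $S$ each receive at least two $F$-attachments, the two resulting induced $F'$s are vertex-disjoint and we are done. Otherwise, at most one vertex of $S$ hosts several $F$-attachments; if in addition some odd component $C$ satisfies $\lambda_1(C)\ge\theta'$, we pair it with a disjoint second witness (another such component, or the single $F'$ coming from the shared vertex). In the residual case, every $1$-attachment component equals $F$, the $F$s attach to distinct vertices of $S$ with at most one exception, and every other odd component belongs to a short list of exceptional ``small'' graphs with $\lambda_1<\theta'$. Bookkeeping via
\[
n = s + \textstyle\sum_C |V(C)| + (\text{even-component vertices}),\qquad
3s = 2|E(S)| + \textstyle\sum_C t_C + |E(S,\text{even})|,
\]
together with explicit size and count bounds on the exceptional components, then forces $n\le 76$, contradicting $n>76$.

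The main obstacle is precisely this last step: one must classify all odd components $C$ (both $1$-attachment and $t_C\ge 3$) with $\lambda_1(C)<\theta'$, and push the counting tightly enough to obtain exactly the threshold $76$. The remaining ingredients --- Tutte's theorem, the edge count, and Cauchy interlacing --- are routine, and the identification $V(C_1)\cup\{v\}\cup V(C_2)=F'$ is the single geometric insight that links the eigenvalue $\theta'$ to the Tutte-barrier structure.
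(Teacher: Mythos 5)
Your plan shares the paper's main ingredients (Tutte's theorem, the graphs $F$ and $F'$, and interlacing with two disjoint pieces of large spectral radius), but it diverges at the decisive point, and the divergence leaves genuine gaps. The paper first exploits the \emph{minimality} of the Tutte set $S$ to force $|S|=1$: a vertex $v\in S$ with exactly two neighbours in odd components can be removed from $S$ without destroying the Tutte condition, so a minimal $S$ is a single vertex and $G\setminus S$ has exactly three odd components, each joined to $v$ by one edge. You skip this reduction and instead run a case analysis over arbitrary $S$, ending in a ``residual case'' that requires classifying all odd components with $\lambda_1<\theta'$. That list is not short: any component whose number of edges to $S$ is large relative to its order (for instance trees of maximum degree $3$, whose spectral radius stays below $2\sqrt2<\theta'$) can occur at arbitrarily large order, so the proposed bookkeeping does not close without the $|S|=1$ reduction, and there is no reason it would reproduce the threshold $76$, which in the paper arises simply from forcing the largest of the three components to have order at least $27$.

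The second, more fundamental gap is in how you produce the two spectral witnesses: you only allow odd components themselves, or an $F'$ formed by two copies of $F$ attached to a common vertex. Even after reducing to $S=\{v\}$, consider the case where exactly one component $C$ is large and the other two are copies of $F$. Your two candidates are $C$ and the induced $F'$ on $\{v\}$ together with the two $F$'s, but these are joined in $G$ by the edge from $v$ into $C$, so the subgraph of $G$ induced on their union is \emph{not} their disjoint union, and the step $\lambda_2(G)\ge\lambda_2(H_1\sqcup H_2)$ is unavailable (adding an edge between two blocks can decrease $\lambda_2$; compare $2K_2$ and $P_4$). The paper's key device, absent from your plan, is to delete the attachment vertex $w$ of the largest component $C$ rather than use $C$ itself: $G\setminus w$ genuinely splits into $C\setminus w$ (order at least $26$, two vertices of degree $2$) and $G\setminus C$ (odd order at least $11$, one vertex of degree $2$), and explicit Rayleigh quotients with test vectors $(2,2,3,\dots,3)^\top$ and $(2,3,\dots,3)^\top$ show each side has largest eigenvalue at least $\theta'$, with $F'$ appearing only as the extremal case $G\setminus C\cong F'$. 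Without that device the plan cannot be completed as described.
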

\begin{proof}
If $G$ is disconnected, then $\lambda_2=3$ and there is nothing to prove.
Let $G$ be a connected cubic graph of order $n>76$ with no perfect matching.
According to Tutte's theorem~\cite{T} there exist a subset $S$ of the vertex set of $G$, such that $G\setminus S$ has $c>|S|$ components of odd order (called odd components). 
We assume that $S$ has the smallest order of all such subsets.
Since $c>|S|$, there must be at least one vertex $v$ (say) in $S$ adjacent to two or more odd components of $G\setminus S$. 
Assume $|S|\geq 2$.
If $v$ is adjacent to three odd components then $G$ is disconnected.
Therefore $v$ has two neighbours in the odd components of $G\setminus S$.
Then $S\setminus v$ has the same property as $S$, which contradicts the minimality of $|S|$.
So we can conclude that $|S|=1$ and, because $n$ is even, $G\setminus S$ has three odd components.
Each of these three components has order at least 5, with equality if it is isomorphic to $F$ from Figure~\ref{FF'}.   
Let $C$ be the largest of the three components.
Then $n>76$ implies that the order of $C$ is at least 27.
Let $w$ be the vertex of $C$ adjacent to $v$ in $G$, and let $x$ and $y$ be the other neighbours of $w$. 
Now $G\setminus w$ has two components $C_1=C\setminus w$ and $C_2=G\setminus C$.
We claim that each of these two components has largest eigenvalue at least $\theta'$.

First consider $C_2$. 
It is easily seen that the order $n_2$ of $C_2$ is odd and at least $11$,
and that equality implies that $C_2$ is isomorphic to $F'$.
So we can assume $n_2\geq 13$.
Let $A_2$ be the adjacency matrix of $C_2$, such that the first entry of $A_2$ corresponds to $v$,
and define the vector $\v_2=(2,3,3,\ldots,3)^\top$.
Then the largest eigenvalue of $A_2$ is at least the Rayleigh quotient 
\[
\R(A_2,\v_2)=\frac{\v_2^\top A_2 \v_2}{\v_2^\top \v_2} = 3-\frac{6}{9n_2-5} > 2,9464 > \theta'.
\]
Similarly we deal with $C_1$ of order $n_1\geq 26$.
Let $A_1$ be the adjacency matrix of $C_1$, such that the first two entries correspond to $x$ and $y$, and define $\v_1=(2,2,3,3,\ldots,3)^\top$.
Then, depending on whether $x$ and $y$ are adjacent, we find two possible values for the Rayleigh quotient $\R(A_1,\v_1)$: $(27n_1-40)/(9n_1-10)$ and $(27n_1-42)/(9n_1-10)$.
Using $n_1\geq 26$ we find in both cases $\R(A_1,\v_1) > \theta'$.
Therefore the largest eigenvalue of $A_1$ is at least $\theta'$.

We conclude that the second largest eigenvalue of $G\setminus w$ is at least $\theta'$, 
and by interlacing we have $\lambda_2\geq\theta'$.
\end{proof}

The requirement $n>76$ in Theorem~\ref{PM} is not best possible. 
By varying $\v_1$ and distinguishing a number of cases related to the neighbourhood of $x$ and $y$, weaker conditions for $n$ can be obtained.
But in any case a lower bound for $n$ is needed. 
For example, consider the graph $F''$ obtained from $F$ and $F'$ by connecting the vertices of degree 2.
Then $F''$ is a cubic graph of order 16 with no perfect matching and second largest eigenvalue equal to $\theta<\theta'$.

\section{Final remarks}
In the investigation of properties characterised by the spectrum of $k$-regular graphs, 
the cubic graphs play a special role.
If $k\leq 2$, every k-regular graph is determined by its spectrum.
For $k\geq 3$ there exist many pairs of $k$-regular cospectral graphs that disprove spectral characterisation of various famous graph properties.
However, if $k=3$ we only know two such properties (being Hamiltonian and having chromatic index $k$).

Our experience is that finding cubic counter examples to spectral characterisations is difficult, and sometimes even impossible.
For example, there exist infinitely many pairs of cospectral connected $k$-regular graphs with different chromatic number when $k = 4$ (Corollary~\ref{cosp4reg}) and for infinitely many larger values of $k$ (see~\cite{EH}), but when $k=3$ such a pair cannot exist.

\begin{proposition}
The chromatic number of a connected cubic graph is determined by its spectrum.
\end{proposition}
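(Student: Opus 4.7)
My plan is to invoke Brooks' theorem to restrict the chromatic number of a connected cubic graph to $\{2,3,4\}$, and then argue that each of these three values can be read off from the spectrum.

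First I would collect the standard spectral facts needed. For any graph $H$ cospectral with a connected cubic graph $G$: the order of $H$ equals the size of its spectrum; $H$ is $3$-regular, since the spectral radius of a graph is at least its average degree with equality only in the regular case, and here $\lambda_1=3$ together with $\sum_i\lambda_i^2=3n$ force $3$-regularity; and $H$ is connected, since the multiplicity of the spectral radius of a regular graph equals its number of components. Thus the property of being a connected cubic graph is spectrally preserved, so it suffices to show that among connected cubic graphs the chromatic number is determined by the spectrum.

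Next, by Brooks' theorem, a connected graph of maximum degree $3$ has chromatic number at most $3$, with the single exception of $K_4$. So for a connected cubic graph $G$ one has $\chi(G)\in\{2,3,4\}$. The case $\chi(G)=4$ is equivalent to $G\cong K_4$, which, since $K_4$ is the unique cubic graph on four vertices, is detected by the order of $G$ alone. The case $\chi(G)=2$ is equivalent to $G$ being bipartite, which for a connected $3$-regular graph is equivalent to $-3$ appearing in the spectrum. All remaining cases give $\chi(G)=3$. Since each of these three conditions is a spectral invariant, the chromatic number is determined by the spectrum.

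I do not anticipate any real obstacle: the argument is essentially a bookkeeping of well-known spectral characterizations (order, regularity, connectedness, bipartiteness) combined with the Brooks-theorem observation that rules out chromatic numbers exceeding $4$. The only point that requires a moment's care is the $\chi(G)=4$ case, where one must note that $K_4$ is singled out by the number of vertices rather than by a more delicate spectral property.
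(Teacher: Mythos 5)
Your proof is correct and follows essentially the same route as the paper's: Brooks' theorem confines $\chi(G)$ to $\{2,3,4\}$, with $\chi=4$ only for $K_4$ and $\chi=2$ exactly for bipartite graphs, both of which are spectrally detectable, leaving $\chi=3$ as the residual case. The extra bookkeeping you supply (regularity and connectedness being spectral, and $K_4$ being recognized by its order) only makes explicit what the paper leaves implicit.
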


\begin{proof}
Let $G$ be a connected cubic graph with chromatic number $\chi(G)$.
According to Brooks' theorem~\cite{B} $\chi(G)\leq 4$ with equality if and only if $G=K_4$, and $\chi(G)=2$ whenever $G$ is bipartite.
Bipartiteness and being $K_4$ can be seen from the spectrum, therefore the spectrum of $G$ determines $G$ when $\chi(G)= 2$ and when $\chi(G)=4$, and therefore also when $\chi(G)=3$.
\end{proof}
Similarly, the clique number (order of the maximum clique) of a connected cubic graph is determined by its spectrum, whilst it is not true for a regular graph in general (see \cite{EH}).
In both cases the connectivity requirement is essential.
Indeed, the disjoint union of a cube and two truncated $K_4$'s is cospectral with
the disjoint union of two $K_4$'s and the bipartite double of a truncated $K_4$.
Both graphs have spectrum $\{3^3,2^6,1^3,0^4,-1^9,-2^6,-3\}$.
The first one has chromatic and clique number equal to 3, 
and for the second graph both numbers are equal to 4.
\\

\noindent{\bf Acknowledgement.}
I thank Krystal Guo for her initial help with the research for this note.


\begin{thebibliography}{11}
\bibitem{BCH}
Zoltan Bl\'azsik, Jay Cummings and Willem H. Haemers,
{\em Cospectral regular graphs with and without a perfect matching},
Discrete Mathematics 338 (2015), 199--201.

\bibitem{B}
R. Leonard Brooks, 
{\em On colouring the nodes of a network}, Mathematical Proceedings of the Cambridge Philosophical Society, 37 (1941), 194--197.

\bibitem{BH}
Andries E. Brouwer and Willem H. Haemers, {\em Spectra of Graphs}, Springer, 2012.

\bibitem{CGH}
Sebastian M. Cioab\u{a}, Krystal Guo and Willem H. Haemers,
{\em The chromatic index of strongly regular graphs}, 
Ars Mathematica Contemporanea 20 (2021), 187--194.

\bibitem{CGrH}
Sebastian M. Cioab\u{a}, David A. Gregory and Willem H. Haemers,
{\em
Matchings in regular graphs from eigenvalues},
Journal of Combinatorial Theory, Series B 99 (2009), 287--297.

\bibitem{DH} Edwin R. van Dam, and Willem H. Haemers,
{\em Which graphs are determined by their spectrum?},
Linear Algebra and its Applications 373 (2003), 241--272.

\bibitem{EH}
Omid Etesami and Willem H. Haemers,
{\em On NP-hard graph properties characterized by the spectrum},
Discrete Applied Mathematics 285 (2020), 526--529.

\bibitem{FT}
Farzaneh Ramezani and Behruz Tayfeh-Rezaie,
{\em Spectral characterization of some cubic graphs},
Graphs and Combinatorics 28 (2012), 869--876.

\bibitem{H} Willem H. Haemers,
{\em Cospectral pairs of regular graphs with different connectivity},
Discussiones Mathematicae Graph Theory 40 (2020), 577–584.

\bibitem{HS}
Willem H. Haemers and Edward Spence,
{\em Graphs cospectral with distance-regular graphs},
Linear and Multilinear Algebra 39 (1995), 91--107.

\bibitem{LWYL}
Fenjin~Liu,~Wei~Wang,~Tao~Yu~and~Hong-Jian~Lai,
{\em Generalized cospectral graphs with and without Hamiltonian cycles},
Linear Algebra and its Applications 585 (2020), 199--208.

\bibitem{T}
William T. Tutte, {\em The factorizations of linear graphs},
Journal of the London Mathematical Society 22 (1947), 107–111.

\bibitem{V} Vadim G. Vizing, {\em Critical graphs with a given chromatic class} (Russian), 
Diskret. Analiz 5 (1965), 9--17.

\bibitem{YW}
Zhidan Yan and Wei Wang,
{\em The smallest pair of cospectral cubic graphs with different chromatic indexes},
Discrete Applied Mathematics
309 (2022), 265--268.

\bibitem{ZCC}
Fuji Zhang, Yi-Chiuan Chen and Zhibo Chen,
{\em Clique-inserted-graphs and spectral dynamics of clique-inserting},
Journal of Mathematical Analysis and Applications 349 (2009), 211--225.

\end{thebibliography}
\end{document}